\newtheorem{theorem}{Theorem}
\newtheorem{conjecture}{Conjecture}
\theoremstyle{definition}
\newtheorem{defi}{Definition}
\begin{document}
\begin{abstract}
Wang and Sun proved a certain summatory formula involving derangements and primitive roots of the unit. We study
such a formula but for the particular case of the set of affine derangements in $\overrightarrow{GL}(\mathbb{Z}/2k\mathbb{Z})$
and its subset of involutive affine derangements in particular; in this last case its value is relatively simple and it is related to even unitary divisors of $k$.
\end{abstract}

\title{Wang-Sun Formula in $\overrightarrow{GL}(\mathbb{Z}/2k\mathbb{Z})$}
\author{
Octavio A. Agustín-Aquino
}
\subjclass[2020]{11A07, 11A25, 05A19, 20B35}
\keywords{Derangements, involutions, affine general linear group, roots of unity}
\address{Universidad Tecnológica de la Mixteca. Carretera a Acatlima Km 2.5 s/n. Huajuapan de León, Oaxaca, México, C. P. 69000.}
\email{octavioalberto@mixteco.utm.mx}
\date{29 July 2022}
\maketitle

\section{Introduction}
In \cite{WS22}, for $n>1$ an odd integer and $\zeta$ a $n$-th primitive root of unity, Wang and Sun proved
that
\[
\sum_{\pi\in D(n-1)}\mathrm{sign}(\pi)\prod_{j=1}^{n-1}\frac{1+\zeta^{j-\pi(j)}}{1-\zeta^{j-\pi(j)}}
= (-1)^{\frac{n-1}{2}}\frac{((n-2)!!)^{2}}{n}
\]
where $D(n-1)$ is the set of all derangements within $S_{n-1}$.

For the mathematical theory of counterpoint it is not the whole of $S_{n-1}$ as important as it
is the general affine linear group
\[
 \overrightarrow{GL}(\mathbb{Z}/2k\mathbb{Z}):=\{e^{u}.v\}_{u\in\mathbb{Z}/2k\mathbb{Z},v\in\mathbb{Z}/2k\mathbb{Z}^{\times}}.
\]
where an element $e^{u}.v\in\overrightarrow{GL}(\mathbb{Z}/2k\mathbb{Z})$ maps $x\in \mathbb{Z}/2k\mathbb{Z}$ to
\[
e^{u}.v(x) = vx+u.
\]

We are interested not only in plain derangements but in particular in those that are involutive, i.e., they are their
own inverses. Affine involutive derangements are called \emph{quasipolarities}, and its set within
$\overrightarrow{GL}(\mathbb{Z}/2k\mathbb{Z})$ will be denoted with $Q_{k}$. They are important because they
relate consonances ($K$) and dissonances ($D$) in $2k$-tone equal temperaments; more precisely, if $\mathbb{Z}/2k\mathbb{Z} = K\sqcup D$ and $q$ is a quasipolarity, then it is the unique affine morphism such that $q(K)=D$ (see \cite{AJM15} for details).

The structure of the article is as follows: first we prove the sum for quasipolarities in Section \ref{S:Q}. Then
we calculate using code in Maxima some values of the sum for derangements in $\overrightarrow{GL}(\mathbb{Z}/2k\mathbb{Z})$
in Section \ref{S:D} and finally we make some remarks in Section \ref{S:C}.

\section{The case of quasipolarities}\label{S:Q}

Note first that all quasipolarities have the same sign as permutations, so we can disregard
it in the original Wang-Sun formula.

\begin{theorem}\label{T:WangSunSumQEven} If $k$ is odd and $\zeta$ a $2k$-th primitive root of unity, then
\[
\sum_{\pi\in Q_{k}}\prod_{j=0}^{n-1}\frac{1+\zeta^{j-\pi(j)}}{1-\zeta^{j-\pi(j)}} = 0.
\]
\end{theorem}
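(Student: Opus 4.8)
The plan is to prove something much stronger than the stated identity: \emph{every individual term of the sum vanishes}, which forces the total to be $0$. Fix a quasipolarity $\pi=e^{u}.v\in Q_{k}$, so that for each $j$ we have $j-\pi(j)=(1-v)j-u$ in $\mathbb{Z}/2k\mathbb{Z}$. Since $\zeta$ is a primitive $2k$-th root of unity we have $\zeta^{k}=-1$, hence the factor $1+\zeta^{\,j-\pi(j)}$ equals $0$ exactly when $j-\pi(j)\equiv k\pmod{2k}$. Thus it is enough to exhibit, for every $\pi\in Q_{k}$, one index $j_{0}$ with $(1-v)j_{0}-u\equiv k\pmod{2k}$; equivalently, writing $d=\gcd(v-1,2k)$, it is enough to show $d\mid u+k$.

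To do this I would first unravel the defining conditions of $Q_{k}$. Demanding $\pi^{2}=\mathrm{id}$ forces $v^{2}\equiv 1\pmod{2k}$ together with $(v+1)u\equiv 0\pmod{2k}$, while the derangement condition $\pi(x)\ne x$ for all $x$ says that $-u$ is not of the form $(v-1)x$, i.e. $d\nmid u$. Because $k$ is odd, $2k=2\cdot k$ is a product of coprime factors and every unit of $\mathbb{Z}/2k\mathbb{Z}$ is odd; using $k\mid (v-1)(v+1)$ one sees that $k=k_{1}k_{2}$ with $k_{1}=\gcd(v-1,k)$, $k_{2}=\gcd(v+1,k)$ coprime (each odd prime power dividing $k$ divides exactly one of $v-1,v+1$, since it cannot divide $\gcd(v-1,v+1)\mid 2$). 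Multiplicativity of $\gcd$ then gives $d=\gcd(v-1,2k)=2k_{1}$ and $\gcd(v+1,2k)=2k_{2}$, so the conditions on $u$ become precisely $k_{1}\mid u$ and $2k_{1}\nmid u$.

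The remaining computation is a one-line parity observation: write $u=k_{1}t$; the conditions $k_{1}\mid u$, $2k_{1}\nmid u$ say exactly that $t$ is odd, and since $k_{2}$ is odd too we get $u+k=k_{1}t+k_{1}k_{2}=k_{1}(t+k_{2})$ with $t+k_{2}$ even, hence $2k_{1}=d$ divides $u+k$. Therefore the congruence $(v-1)j\equiv-(u+k)\pmod{2k}$ has a solution $j_{0}$, for which $j_{0}-\pi(j_{0})\equiv k\pmod{2k}$ and so $1+\zeta^{\,j_{0}-\pi(j_{0})}=1+\zeta^{k}=0$; the whole product over $j=0,\dots,2k-1$ vanishes, and summing over $Q_{k}$ yields $0$. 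As a sanity check, the degenerate case $v=1$ (where $k_{1}=k$, $k_{2}=1$, and the only quasipolarity is the central translation $e^{k}.1$) is covered: there $j-\pi(j)\equiv-k\equiv k$ for every $j$.

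The only place requiring care — and hence the ``main obstacle'' — is the structure theory of $Q_{k}$ in the second paragraph: verifying that $k=k_{1}k_{2}$ is genuinely a coprime factorisation and that the involution and derangement constraints translate \emph{exactly} into $k_{1}\mid u$ and $2k_{1}\nmid u$. Once that bookkeeping is settled, the vanishing of each term is immediate.
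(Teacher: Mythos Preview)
Your proof is correct and shares the paper's overall strategy: both arguments show that \emph{every} summand vanishes by proving that for each quasipolarity $e^{u}.v$ the quantity $d=\gcd(v-1,2k)$ divides $u+k$, which guarantees a $j_{0}$ with $j_{0}-\pi(j_{0})\equiv k\pmod{2k}$ and hence a zero factor $1+\zeta^{k}$.

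Where you differ is in how that divisibility is established. The paper imports the parametrisation $u=\sigma(v)q+2k/\tau(v)$ from an external reference and then reduces $\sigma(v)\mid(u+k)$ to the solvability of an auxiliary linear Diophantine equation in two unknowns. You instead rederive the structure of $Q_{k}$ from scratch: the involution condition $(v+1)u\equiv0\pmod{2k}$ and the derangement condition $d\nmid u$ together force $u=k_{1}t$ with $t$ odd (using the coprime splitting $k=k_{1}k_{2}$), after which $u+k=k_{1}(t+k_{2})$ is visibly a multiple of $2k_{1}=d$ by parity. Your route is more self-contained and the final step is a one-line parity check rather than an appeal to Diophantine solvability; the paper's route, on the other hand, leans on its standing characterisation of quasipolarities, which it reuses in the even-$k$ theorem that follows.
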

\begin{proof}
Whenever $j-\pi(j) = k$ then
\[
 1+\zeta^{j-\pi(j)} = 1+\zeta^{k} = 1-1 = 0,
\]
thus it suffices to prove that this happens for any quasipolarity $\pi = e^{u}.v\in Q_{k}$ for some $0\leq j\leq 2k-1$. From
\cite[Theorem 3.1]{oA12}, we know that
\begin{equation}\label{Eq:Carac}
 u = \sigma(v) q + \frac{2k}{\tau(v)}
\end{equation}
where $\sigma(v) = \gcd(v-1,2k)$, $\tau(v) = \gcd(v+1,2k)$ and $q$ is any integer. Now, we want to show that
there is a $j$ such that
\[
 j-\pi(j) = j-e^{u}.v(j) = (1-v)j-u \equiv k \pmod{2k},
\]
which is possible if, and only if, $\gcd(1-v,2k)\backslash (k+u)$. In other words, if and only if
$\sigma(v)\backslash (k+u)$. Using \eqref{Eq:Carac}, this can be rewritten as
\[
 \sigma(v)\backslash \left(k+\sigma(v) q+\frac{2k}{\tau(v)}\right)
\]
for some integer $q$. Thus, if we can show that the congruence
\[
 \sigma(v) x \equiv k-\frac{2k}{\tau(v)} \pmod{2k}
\]
has a solution for $x$, then we will be done. Indeed, $\sigma(v) = 2I_{1}$ and $\tau(v)=2I_{2}$
for some $I_{1},I_{2}$ odd and coprime divisors of $k$. The linear Diophantine equation
\[
 2x+2\left(\frac{k}{I_{1}}\right)y = \left(\frac{k}{I_{1}}-\frac{k}{I_{1}I_{2}}\right)
\]
has a solution since the GCD of the the coefficients on the left is $2$ and the number on the
right is even. Hence, if $(x,y)$ is a solution, then
\[
 2I_{1}x+\frac{k}{I_{2}}-k = \sigma(v)x+\frac{2k}{\tau(v)}-k = 2ky
\]
which shows that $x$ is the required solution.
\end{proof}

\begin{table}[ht]
\caption{Results of the evaluation of the sum $S$ of Theorem \ref{T:WangSunSumQEven} for some values of $k$.}
\label{T:Involutions}
\begin{tabular}{|l|llllllllll|}
\hline
$k$ & $3$ & $4$ & $5$ & $6$ & $7$ & $8$ & $9$ & $10$ & $11$ & $12$\\
\hline
$S$ & $0$ & $4$ & $0$ & $8$ & $0$ & $8$ & $0$ & $12$ & $0$ & $16$ \\
\hline
\end{tabular}
\end{table}

While this result was relatively easy to conjecture considering a few values of the sum
for some $k$ (as they can be seen in Table \ref{T:Involutions}), it is less easy to make a
guess for the sum when $k$ is even, but the following seems reasonable.

\begin{conjecture}\label{Conj:Even} If $\zeta$ is a $2k$-th primitive root of unity, then
\[
\sum_{\pi\in Q_{k}}\prod_{j=0}^{n-1}\frac{1+\zeta^{j-\pi(j)}}{1-\zeta^{j-\pi(j)}} =
\sum_{\pi\in Q_{k}}\prod_{j=0}^{n-1}[j-\pi(j)\neq k].
\]
\end{conjecture}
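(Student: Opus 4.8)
\emph{Proof proposal.} The plan is to prove the identity term by term: for every $\pi=e^{u}.v\in Q_{k}$ I would show
\[
P(\pi):=\prod_{j=0}^{2k-1}\frac{1+\zeta^{j-\pi(j)}}{1-\zeta^{j-\pi(j)}}=\prod_{j=0}^{2k-1}[\,j-\pi(j)\neq k\,],
\]
and then add over $\pi\in Q_{k}$. One case is immediate: if $j_{0}-\pi(j_{0})\equiv k\pmod{2k}$ for some $j_{0}$, the $j_{0}$-th numerator is $1+\zeta^{k}=0$ while no denominator vanishes (being a derangement, $\pi$ has $j-\pi(j)\not\equiv0$ for all $j$), so $P(\pi)=0$, and both sides are $0$. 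It therefore remains to treat a quasipolarity with $j-\pi(j)\neq k$ for every $j$; there the right-hand product is $1$, so the goal is $P(\pi)=1$.

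Write $j-\pi(j)=(1-v)j-u$ and set $\sigma:=\sigma(v)=\gcd(v-1,2k)$, $H:=(1-v)(\mathbb{Z}/2k\mathbb{Z})=\sigma\,\mathbb{Z}/2k\mathbb{Z}$, $m:=|H|=2k/\sigma$. Since $j\mapsto(1-v)j$ is $\sigma$-to-one onto $H$, collecting equal exponents gives $P(\pi)=\bigl(\prod_{h\in H}\tfrac{1+\zeta^{h-u}}{1-\zeta^{h-u}}\bigr)^{\sigma}$. As $h$ runs over $H$ the powers $\zeta^{h}$ run over all $m$-th roots of unity, so from $\prod_{\mu^{m}=1}(X-\mu)=X^{m}-1$ one gets $\prod_{h\in H}(1-\zeta^{h-u})=1-\zeta^{-um}$ and $\prod_{h\in H}(1+\zeta^{h-u})=1-(-1)^{m}\zeta^{-um}$, whence
\[
P(\pi)=\left(\frac{1-(-1)^{m}\zeta^{-um}}{1-\zeta^{-um}}\right)^{\!\sigma},
\]
the denominator being nonzero because $\zeta^{-um}=1$ is equivalent to $\sigma\mid u$, which is impossible since a derangement has no fixed point.

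Everything now reduces to showing that, under the standing hypothesis $k\neq j-\pi(j)$ for all $j$, the integer $m$ is \emph{even}: then $(-1)^{m}=1$, the fraction above is $1$, and $P(\pi)=1$. I expect this parity statement to be the main obstacle. To prove it, write $\sigma=2I_{1}$ and $\tau(v)=\gcd(v+1,2k)=2I_{2}$ (both even because $v$ is a unit mod $2k$, hence odd). From $\gcd(v-1,v+1)=2$ one obtains $\gcd(\sigma,\tau(v))=2$, hence $\sigma\tau(v)\mid4k$, so $I_{1}I_{2}\mid k$ and therefore $I_{2}\mid k/I_{1}=m$. Suppose $m$ were odd; then $I_{2}$ is odd, $I_{2}+1$ is even, and combined with $I_{2}\mid m$ this gives $2I_{2}\mid m(I_{2}+1)$. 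On the other hand \eqref{Eq:Carac} yields $u\equiv 2k/\tau(v)=k/I_{2}\pmod{\sigma}$, and a short computation (clearing denominators and using $k=mI_{1}$) shows that $2I_{2}\mid m(I_{2}+1)$ is equivalent to $\sigma\mid(k+u)$, i.e.\ to $k\in-u+H$, i.e.\ to $k\equiv j-\pi(j)$ for some $j$ — contrary to the hypothesis. Hence $m$ is even.

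Summing the termwise identity over $\pi\in Q_{k}$ then gives the conjecture. As a sanity check that also pinpoints the difficulty: when $k$ is odd one has $m=k/I_{1}$ odd for \emph{every} quasipolarity, so the argument of the previous paragraph always forces $k\in\{j-\pi(j)\}$, whence $P(\pi)=0$ for all $\pi$ — reproving Theorem \ref{T:WangSunSumQEven}. Thus the genuinely new content is the case $k$ even, and the whole weight of the proof rests on the parity lemma for $m$; the root-of-unity telescoping surrounding it is purely formal.
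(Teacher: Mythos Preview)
Your proof is correct. Both you and the paper argue term by term, dispose of the vanishing case ($k\in\{j-\pi(j)\}$) immediately, and reduce the non-vanishing case to the same arithmetic fact: $\sigma(v)\mid k$ (equivalently your $m=2k/\sigma$ is even, equivalently the paper's $I_{2}$ is even, since for a quasipolarity $\sigma\tau=4k$ forces $I_{1}I_{2}=k$, so in fact $m=I_{2}$).

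Where the two diverge is in how the product is shown to equal $1$ once that parity condition is in hand. The paper pairs indices: given $j$ it finds $\ell$ with $(\ell-\pi(\ell))\equiv(j-\pi(j))+k$, so the numerator factor $1+\zeta^{j-\pi(j)}$ cancels against the denominator factor $1-\zeta^{\ell-\pi(\ell)}=1+\zeta^{j-\pi(j)}$, and symmetrically. You instead collapse the whole product via $\prod_{\mu^{m}=1}(X-\mu)=X^{m}-1$ to the closed form $\bigl((1-(-1)^{m}\zeta^{-um})/(1-\zeta^{-um})\bigr)^{\sigma}$, after which $m$ even finishes instantly. Your device is cleaner and makes the role of the parity of $m$ completely transparent; the paper's pairing is more hands-on but is really the same cancellation seen factor by factor (the pairing exists precisely because $k\in H$, i.e.\ $\sigma\mid k$). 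Your derivation of the parity lemma via $2I_{2}\mid m(I_{2}+1)\Leftrightarrow\sigma\mid(k+u)$ is slightly more roundabout than the paper's, which uses $k=I_{1}I_{2}$ directly to get $I_{2}$ even; you only use $I_{1}I_{2}\mid k$, which is why your argument also cleanly specializes to reprove Theorem~\ref{T:WangSunSumQEven} for odd $k$.
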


The search of the sequence $4,8,8,12$ in the Online Encyclopedia of Integer Sequences (OEIS) \cite{oeis} yields
as first result A054785, which is the difference of the sum of divisors of $2n$ and $n$. The values match up to $k=16$, but they differ at $k=18$, where the former is $20$ and the later is $26$.
Nonetheless, the results from \cite{oA14} lead us
in the right direction. We need some definitions first.

\begin{defi}
A divisor $d$ of $n$ is said to be \emph{unitary} if $d\perp n/d$; we denote that
$d$ is a unitary divisor of $n$ with $d\,\backslash\!\backslash n$. We have the sum
of unitary divisors function
\[
s_{1}^{*}(n) = \sum_{d\,\backslash\!\backslash n} d.
\]
\end{defi}

In \cite{oA14} it is proved
that
\[
|Q_{k}| = s_{1}^{*}(k).
\]

Now we can make Conjecture \ref{Conj:Even} more precise.
\begin{theorem}\label{T:WangSunSumQGen} If $\zeta$ is a $2k$-th primitive root of unity, then
\[
\sum_{\pi\in Q_{k}}\prod_{j=0}^{n-1}\frac{1+\zeta^{j-\pi(j)}}{1-\zeta^{j-\pi(j)}} =
[2\backslash k](|Q_{2k}|-|Q_{k}|) = [2\backslash k](s_{1}^{*}(2k)-s_{1}^{*}(k)).
\]
\end{theorem}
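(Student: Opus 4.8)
The plan is to reverse the vanishing-factor argument of Theorem~\ref{T:WangSunSumQEven}: throw away the terms that vanish, prove that each surviving term equals $1$, and count the survivors using the description of $Q_{k}$ in \cite{oA12} together with the count $|Q_{m}|=s_{1}^{*}(m)$ of \cite{oA14}.

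First I would reduce the sum. Every $\pi\in Q_{k}$ is a fixed-point-free involution, so the differences $d_{j}:=j-\pi(j)$ are never $\equiv 0\pmod{2k}$ and the product is well defined, while, exactly as in the proof of Theorem~\ref{T:WangSunSumQEven}, a single $j$ with $d_{j}\equiv k\pmod{2k}$ annihilates the product since $1+\zeta^{k}=0$. For $\pi=e^{u}.v$, such a $j$ exists iff $(1-v)j\equiv k+u\pmod{2k}$ is solvable, i.e. $\sigma(v)\mid k+u$, which by \eqref{Eq:Carac} amounts to $\sigma(v)\mid k+\tfrac{2k}{\tau(v)}$ and hence depends only on $v$; call $\pi$ (and $v$) \emph{admissible} when this fails. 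Then the left-hand side equals $\sum_{\pi\in Q_{k}\ \mathrm{admissible}}\prod_{j}\tfrac{1+\zeta^{d_{j}}}{1-\zeta^{d_{j}}}$, and Theorem~\ref{T:WangSunSumQEven} says there are no admissible $\pi$ when $k$ is odd; this is where the factor $[2\backslash k]$ originates.

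Next I would evaluate each surviving product. As $j$ runs over $\mathbb{Z}/2k\mathbb{Z}$, the value $d_{j}=(1-v)j-u$ sweeps the coset $-u+\sigma(v)\,\mathbb{Z}/2k\mathbb{Z}$, hitting each element exactly $\sigma(v)$ times, so the product is $P^{\sigma(v)}$ with $P=\prod_{m=0}^{N-1}\tfrac{1+\beta\omega^{m}}{1-\beta\omega^{m}}$, where $N=2k/\sigma(v)$, $\omega=\zeta^{\sigma(v)}$ is a primitive $N$-th root of unity, and $\beta=\zeta^{-u}$. From $\prod_{m=0}^{N-1}(X-\omega^{m})=X^{N}-1$ one gets $\prod_{m}(1-\beta\omega^{m})=1-\beta^{N}$ and $\prod_{m}(1+\beta\omega^{m})=1-(-1)^{N}\beta^{N}$, so $P=\tfrac{1-(-1)^{N}\beta^{N}}{1-\beta^{N}}$. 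Now $\beta^{N}\neq 1$, since $\beta^{N}=1$ would mean $\sigma(v)\mid u$ and give $\pi$ a fixed point; and a short computation with the $2$-adic valuations of $v-1$ and $v+1$ (distinguishing $v_{2}(v-1)=1$ from $v_{2}(v-1)\ge 2$ and comparing with $v_{2}(k)$) shows that an admissible $v$ — which in particular forces $k$ to be even — has $\sigma(v)$ of $2$-adic valuation $1$, so $N=2k/\sigma(v)$ is even. Hence $P=1$, $P^{\sigma(v)}=1$, and the left-hand side equals $[2\backslash k]$ times the number of admissible elements of $Q_{k}$.

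Finally I would count, for $k=2^{a}k'$ even with $k'$ odd. By \eqref{Eq:Carac}, for a fixed square root $v$ of $1$ modulo $2k$ the quasipolarities with linear part $v$ are the $e^{u}.v$ with $u$ in the coset $\tfrac{2k}{\tau(v)}+\sigma(v)\,\mathbb{Z}/2k\mathbb{Z}$; there are $2k/\sigma(v)$ of them if $\sigma(v)\nmid\tfrac{2k}{\tau(v)}$ and none otherwise. The same $2$-adic analysis — together with the fact that the odd part of $\sigma(v)$ always divides the odd part of $\tfrac{2k}{\tau(v)}$ because $\gcd(v-1,v+1)=2$ — shows that a $v$ carrying a quasipolarity must satisfy $v\equiv\pm 1\pmod{2^{a+1}}$, and then $v\equiv 1$ is inadmissible while $v\equiv -1$ is admissible. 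By the Chinese Remainder Theorem an admissible $v$ is a choice of sign $v\equiv\pm 1\pmod{p^{e}}$ at each odd prime power $p^{e}\,\|\,k$; if $T$ is the set of primes assigned $-1$ and $n_{T}=\prod_{p\in T}p^{e}$, one computes $\sigma(v)=2k'/n_{T}$, so this $v$ carries $2k/\sigma(v)=2^{a}n_{T}$ quasipolarities. Summing over all $T$,
\[
\#\{\pi\in Q_{k}\ \mathrm{admissible}\}=2^{a}\sum_{T}n_{T}=2^{a}\prod_{p^{e}\,\|\,k'}(1+p^{e})=2^{a}s_{1}^{*}(k').
\]
Since $s_{1}^{*}$ is multiplicative with $s_{1}^{*}(p^{e})=1+p^{e}$, we have $s_{1}^{*}(2k)-s_{1}^{*}(k)=\bigl((1+2^{a+1})-(1+2^{a})\bigr)s_{1}^{*}(k')=2^{a}s_{1}^{*}(k')$, which by \cite{oA14} equals $|Q_{2k}|-|Q_{k}|$; together with the first two steps this proves the theorem. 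The main obstacle is the $2$-adic bookkeeping of the last two steps: deciding which square roots of $1$ modulo $2k$ carry quasipolarities, which of those are admissible, and checking that the surviving exponent $N$ is always even.
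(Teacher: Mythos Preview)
Your proposal is correct and follows the same three–step skeleton as the paper (discard the summands killed by a factor $1+\zeta^{k}$, show each survivor equals $1$, then count the survivors), but the execution of the last two steps is genuinely different.

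For the evaluation step, the paper argues combinatorially: once the admissible linear parts are identified it observes that $\sigma(v)\mid k$, hence for every $j$ there is an $\ell$ with $d_{\ell}\equiv d_{j}+k\pmod{2k}$, so each numerator factor $1+\zeta^{d_j}$ is matched by an identical denominator factor $1-\zeta^{d_\ell}=1+\zeta^{d_j}$, and everything cancels to $1$. You instead note that the multiset $\{d_j\}$ is a coset of $\sigma(v)\mathbb{Z}/2k\mathbb{Z}$ swept with uniform multiplicity and evaluate the resulting product via $\prod_{m}(X-\omega^{m})=X^{N}-1$, reducing the question to the parity of $N=2k/\sigma(v)$; the same key fact $\sigma(v)\mid k$ then reappears as ``$N$ is even''. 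Your route is more algebraic and gives the value of the product in closed form even in the non-admissible cases, while the paper's pairing is shorter once $\sigma(v)\mid k$ is known.

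For the counting step, the paper invokes the correspondence of \cite{oA14} between linear parts of quasipolarities and unitary divisors of $k$: the admissible $v$ are exactly those represented by the \emph{even} unitary divisors $I_{2}\,\|\,k$, each contributing $I_{2}$ quasipolarities, so the total is $\sum_{d\,\|\,k,\ d\text{ even}}d$, which is then rewritten as $s_{1}^{*}(2k)-s_{1}^{*}(k)$. You bypass that correspondence and work directly with CRT and $2$-adic valuations, parametrising admissible $v$ by subsets $T$ of the odd prime divisors of $k$ and summing $2^{a}n_{T}$; the multiplicative identity $\sum_{T}n_{T}=s_{1}^{*}(k')$ then lands on the same value $2^{a}s_{1}^{*}(k')=s_{1}^{*}(2k)-s_{1}^{*}(k)$. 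Your computation is self-contained (it does not need the divisor--involution dictionary of \cite{oA14}), whereas the paper's version makes the link with even unitary divisors explicit, which is the formulation they want for the musicological remarks in Section~\ref{S:C}.
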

\begin{proof}
We have already proved the case when $k$ is odd, so suppose $k$ is even. If $\pi = e^{u}.v\in Q_{k}$
and $j-\pi(j) \nequiv k \pmod{2k}$ so the summand associated to $\pi$ is not $0$, then we require the congruence
\begin{equation}\label{E:NoCong}
 (v-1)j \equiv k-u\pmod{2k}
\end{equation}
to have no solutions for $j$; this happens if and only if $\sigma(v)\medslash\!\!\!\!\!\backslash (k-u)$.
Again, because of \cite[Theorem 3.1]{oA12}, we have $\sigma(v) = \frac{4k}{\tau(v)}$ and
$u = \sigma(v)q+\frac{2k}{\tau(v)}$. Hence \eqref{E:NoCong} has no solutions if and only
if
\begin{equation}\label{E:DoesNotDivide}
\sigma(v)\medslash\!\!\!\!\!\backslash \left(\frac{\sigma(v)\tau(v)}{4}+\sigma(v)q+\frac{\sigma(v)}{2}\right).
\end{equation}

We know $\sigma(v) = 2I_{1}$ and $\tau(v) = 2I_{2}$ where $I_{1}\perp I_{2}$. Moreover
\[
 2k = \frac{\sigma(v)\tau(v)}{2} = \frac{4I_{1}I_{2}}{2} =2I_{1}I_{2}
\]
thus $I_{1}$ and $I_{2}$ are unitary divisors of $k$. By \cite[Proposition 2.3]{oA14}, we know
that both $I_{1}$ and $I_{2}$ represent involutions of $\mathbb{Z}/2k\mathbb{Z}$. If $I_{1}$
is odd, then $I_{2}>0$ is even and the associated involution defines $\frac{2k}{\sigma(v)} = I_{2}$
quasipolarities. In particular, $2I_{1}=\sigma(v)$ divides $k$. Furthermore, $\tau(v)$ is divisible by $4$ and
$\sigma(v)$ does not divide $\sigma(v)/2$, thus \eqref{E:DoesNotDivide} holds. Conversely, if \eqref{E:DoesNotDivide}
is true then $I_{2}$ has to be even for otherwise
\[
 \frac{1}{2}\sigma(v)(2s+1)+\tfrac{1}{2}\sigma(v) = \sigma(v)(s+1)
\]
which contradicts \eqref{E:DoesNotDivide}.

For every $\pi=e^{u}.v$ with its linear part $v$ represented by an even unitary divisor of $k$ and every $0\leq j\leq 2k-1$, we claim that there is an $\ell$
such that $(j-\pi(j))-(\ell-\pi(\ell)) \equiv k \pmod{2k}$, for this is equivalent to
\[
 (v-1)\ell \equiv k-(v-1)j \pmod{2k}.
\]

Since $\sigma(v)\backslash k$ and $\sigma(v)\backslash (1-v)$, it implies that there exist a solution
$\ell$. Therefore, for a summand indexed by $\pi$ such that $j-\pi(j)\neq k$ for
every $0\leq j \leq 2k-1$ and has a factor $1+\zeta^{j-\pi(j)}$ in the denominator, it also has a factor
\[
 1-\zeta^{\ell-\pi(\ell)} = 1-\zeta^{j-\pi(j)+k} = 1-\zeta^{j-\pi(j)}\zeta^{k} = 1+\zeta^{j-\pi(j)}
\]
in the numerator, so all the factors cancel out and the summand equals $1$.

Thus we have to sum the number of even unitary divisors of $k$. This is
accomplished by $s_{1}^{*}(2k) - s_{1}^{*}(k)$ because
\begin{align*}
s_{1}^{*}(2k) - s_{1}^{*}(k) &=
\sum_{d\,\backslash\!\backslash 2k,d\text{ even}}d+
\sum_{d\,\backslash\!\backslash 2k,d\text{ odd}}d-
\sum_{d\,\backslash\!\backslash k,d\text{ even}}d
-\sum_{d\,\backslash\!\backslash k,d\text{ odd}}d\\
&=
2\sum_{d\,\backslash\!\backslash k,d\text{ even}}d+
\sum_{d\,\backslash\!\backslash k,d\text{ odd}}d-
\sum_{d\,\backslash\!\backslash k,d\text{ even}}d
-\sum_{d\,\backslash\!\backslash k,d\text{ odd}}d\\
&=\sum_{d\,\backslash\!\backslash k,d\text{ even}}d.\qedhere
\end{align*}
\end{proof}

\section{The case of the derangements of the general affine group}\label{S:D}
Let us denote the derangements within $\overrightarrow{GL}(\mathbb{Z}/2k\mathbb{Z})$ with
$\Delta_{k}$. The following code in Maxima calculates
\[
S=\sum_{\pi\in \Delta_{k}}\mathrm{sign}(\pi)\prod_{j=0}^{n-1}\frac{1+\zeta^{j-\pi(j)}}{1-\zeta^{j-\pi(j)}}
\]
for the particular case of $k=4$.

\begin{verbatim}
load("combinatorics");
S:0;
kk:4;
zeta: exp(%pi*%i/kk);
ZZ:[];
L:[];
for k1:0 thru (2*kk-1) do ZZ:append(ZZ,[k1]);
for u:0 thru (2*kk-1) do
 (for v:1 thru (2*kk-1) step 2 do
  (if (gcd(v^2,2*kk)=1) then
   (
    M:mod(v*ZZ+u-ZZ,2*kk),
    if(not(product(M[l],l,1,2*kk)=0)) then
     L:append(L,[[u,v]])
   )
  )
 )$
for k1:1 thru length(L) do
 (P: (-1)^perm_parity(mod(L[k1][2]*ZZ+L[k1][1],2*kk)+1),
  for k2:0 thru (2*kk-1) do
   P: trigrat(P*(1+zeta^(k2-mod(L[k1][2]*k2+L[k1][1],2*kk)))
   /(1-zeta^(k2-mod(L[k1][2]*k2+L[k1][1],2*kk)))),
  S:ratsimp(S+P)
 )$
\end{verbatim}

As far as my computer's memory and simplification capacity of Maxima allow, we calculated $S$ for
$k=3,\ldots,9$ \emph{mutatis mutandis}. The results are contained in Table \ref{T:SumWangSunGL}.
Except for the sign and the prime factors of $k$ in the denominator, no other pattern is evident and
searches in the OEIS do not point yet in a meaningful direction.

\begin{table}[ht]
\caption{Results of the execution of the code.}
\label{T:SumWangSunGL}
\begin{tabular}{|l|l|}
\hline
$k$ & $S$\\
\hline
$6$ & $1456/27$\\
$8$ & $-2300$\\
$10$ & $762256/5$\\
$12$ & $-10643506432/729$\\
$14$ & $13444304416/7$\\
$16$ & $-332995177452$\\
$18$ & $1450048309488389824/19683$\\
\hline
\end{tabular}
\end{table}

\section{Some final remarks}\label{S:C}

An interesting outcome of Theorem \ref{T:WangSunSumQEven}, from the musicological viewpoint, is that for $2k$-tone
equal temperaments with $k$ even, there is always a quasipolarity such that a consonance and a dissonance are
separated by a \emph{tritone} (which always corresponds to $k$).

When $k$ is odd, the proof of Theorem \ref{T:WangSunSumQGen} tells us that Wang-Sun sum counts quasipolarities
represented by even unitary divisors along the direction of \cite{oA14}, so it suggest that we should explore more deeply the relationship between the
two concepts and its musicological implications.

For the Wang-Sun sum over all affine derangements we could not prove or conjecture a general formula, and thus we stress the non-triviality of studying it for derangements within subgroups.

\section*{Acknowledgments}

I deeply thank José Hernández Santiago at Universidad Autónoma de Guerrero for his help
completing the proof of Theorem \ref{T:WangSunSumQEven}.
\bibliographystyle{abbrv}
\bibliography{sun_sums}

\begin{thebibliography}{1}

\bibitem{oA12}
O.~A. Agust\'{\i}n-Aquino.
\newblock Antichains and counterpoint dichotomies.
\newblock {\em Contributions to Discrete Mathematics}, 7(2):97--104, 2012.

\bibitem{oA14}
O.~A. Agust\'{\i}n-Aquino.
\newblock Prime injections and quasipolarities.
\newblock {\em Le Matematiche}, 69(1):159--168, 2014.

\bibitem{AJM15}
O.~A. Agust\'{\i}n-Aquino, J.~Junod, and G.~Mazzola.
\newblock {\em Computational Counterpoint Worlds}.
\newblock Springer, 2015.

\bibitem{oeis}
N.~J.~A. Sloane.
\newblock The {O}n-{L}ine {E}ncyclopedia of {I}nteger {S}equences.
\newblock Published electronically at http://oeis.org, 2022.

\bibitem{WS22}
H.~Wang and Z.-W. Sun.
\newblock Proof of a conjecture involving derangements and roots of unity,
  2022.
\newblock arXiv:2206.02589.

\end{thebibliography}

\end{document}